\newtheorem{theorem}{Theorem}
\newtheorem{proposition}[theorem]{Proposition}
\theoremstyle{definition}
\newtheorem{definition}[theorem]{Definition}
\theoremstyle{remark}
\newtheorem{remark}[theorem]{Remark}
\newcommand{\set}[1]{\{ {#1} \}}
\newcommand{\wh}[1]{{\smash{\widehat{#1}}}}
\newcommand{\wt}[1]{{\smash{\widetilde{#1}}}}
\newcommand{\bbC}{\mathbb{C}}
\newcommand{\bbF}{\mathbb{F}}
\newcommand{\bbP}{\mathbb{P}}
\newcommand{\bbR}{\mathbb{R}}
\newcommand{\mbc}{\mathbf{c}}
\newcommand{\mbD}{\mathbf{D}}
\newcommand{\mbE}{\mathbf{E}}
\newcommand{\mbg}{\mathbf{g}}
\newcommand{\mbp}{\mathbf{p}}
\newcommand{\mcG}{\mathcal{G}}
\newcommand{\mcL}{\mathcal{L}}
\newcommand{\mfaut}{\mathfrak{aut}}
\newcommand{\mfg}  {\mathfrak{g}}
\newcommand{\mfh}  {\mathfrak{h}}
\newcommand{\mfhol}{\mathfrak{hol}}
\newcommand{\mfn}  {\mathfrak{n}}
\newcommand{\mfsl} {\mathfrak{sl}}
\newcommand{\mfso} {\mathfrak{so}}
\newcommand{\wtH}{\wt{H}}
\newcommand{\wtg}{\wt{g}}
\newcommand{\wtM}{\wt{M}}
\newcommand{\wtR}{\wt{R}}
\DeclareMathOperator{\End}{End}
\DeclareMathOperator{\G}{G}
\DeclareMathOperator{\GL}{GL}
\DeclareMathOperator{\Hol}{Hol}
\DeclareMathOperator{\id}{id}
\DeclareMathOperator{\Ooperator}{O}
\DeclareMathOperator{\Ric}{Ric}
\DeclareMathOperator{\SO}{SO}
\begin{document}

\title{Cartan's incomplete classification \\ and an explicit ambient metric of holonomy $\G_2^*$}

\dedicatory{Dedicated to C. Robin Graham on the occasion of his 60th birthday.}

\author{Travis Willse}

\address{
Fakult\"at f\"ur Mathematik \\
Universit\"at Wien \\
Oskar-Morgenstern-Platz 1, 1090 Wien \\
Austria}

\email{travis.willse@univie.ac.at}

\maketitle

\begin{abstract}
In his 1910 ``Five Variables'' paper, Cartan solved the equivalence problem for the geometry of $(2, 3, 5)$ distributions and in doing so demonstrated an intimate link between this geometry and the exceptional simple Lie groups of type $\G_2$. He claimed to produce a local classification of all such (complex) distributions which have infinitesimal symmetry algebra of dimension at least $6$ (and which satisfy a natural uniformity condition), but in 2013 Doubrov and Govorov showed that this classification misses a particular distribution $\mbE$. We produce a closed form for the Fefferman-Graham ambient metric $\smash{\wtg_{\mbE}}$ of the conformal class induced by (a real form of) $\mbE$, expanding the small catalogue of known explicit, closed-form ambient metrics. We show that the holonomy group of $\smash{\wtg_{\mbE}}$ is the exceptional group $\smash{\G_2^*}$ and use that metric to give explicitly a projective structure with normal projective holonomy equal to that group.

We also present some simple but apparently novel observations about ambient metrics of general left-invariant conformal structures that were used in the determination of the explicit formula for $\smash{\wtg_{\mbE}}$.
\end{abstract}


\markboth{}{}

\thispagestyle{empty}

\tableofcontents

\section{Introduction}\label{section:introduction}
Cartan's most involved application of his powerful method of equivalence was the resolution of the (local) equivalence problem for the geometry of $2$-plane distributions $\mbD$ on $5$-manifolds $M$, which he reported in his landmark 1910 ``Five Variables'' paper \cite{Cartan}. Such distributions that are maximally nonintegrable in the sense that
\[
    [\mbD, [\mbD, \mbD]] = TM
\]
are called \textbf{$(2, 3, 5)$ distributions} because $2, 3, 5$ are the respective ranks of the vector bundles in the derived filtration $\mbD \subset [\mbD, \mbD] \subset TM$ of $TM$. This geometry has proved interesting for numerous reasons: It comprises a first class of distributions with continuous local invariants, it is connected to the geometry of surfaces rolling on one another without slipping or twisting \cite{AnNurowski,BorMontgomery,BryantHsu} and hence to natural problems in control theory \cite{AgrachevSachov}, it is intimately linked (as Cartan showed in the aforementioned article\footnote{In fact, Cartan \cite{CartanStructureOfGroups} and Engel \cite{Engel} (separately) first discovered this relationship in simultaneous notes in 1893: They gave explicit distributions on $\bbC^5$ whose infinitesimal symmetry algebras are isomorphic to the complex simple Lie algebra $\mfg_2^{\bbC}$, hence furnishing the first explicit realization of an exceptional such algebra. See \cite{Agricola} for a concise history of the topic.}) to the exceptional simple complex Lie group of type $\G_2^{\bbC}$ and then via the natural inclusion $\G_2^* \hookrightarrow \SO(3, 4)$ to conformal geometry \cite{NurowskiDifferential}, and it is the appropriate structure for naturally projectively compactifying strictly nearly \mbox{(para-)}K\"{a}hler structures in dimension $6$ \cite{GoverPanaiWillse}. (Here, $\G_2^*$ denotes the automorphism group of the algebra of split octonions, one of the split real forms of the complex. connected simple Lie group $\G_2^{\bbC}$.) In turn, via the Fefferman-Graham ambient construction \cite{FeffermanGraham} and a general extension theorem \cite{GrahamWillse} these structures can be used to construct metrics of holonomy contained in and sometimes equal to $\G_2^*$.

Cartan claimed to classify locally all (implicitly, complex) $(2, 3, 5)$ distributions $(M, \mbD)$ that both (1) have infinitesimal symmetry algebra $\mfaut(\mbD)$ of dimension at least $6$ and (2) satisfy a natural uniformity condition called \textit{constant root type} (see \S\ref{section:missing-distribution}). The algebra $\mfaut(\mbD)$ consists of all vector fields $X \in \Gamma(TM)$ whose flow preserves $\mbD$, or equivalently, for which $\mcL_X Y \in \Gamma(\mbD)$ for all $Y \in \Gamma(\mbD)$. Some 103 years later, Doubrov and Govorov upended this classification \cite{DoubrovGovorovCounterexample} by showing that it misses a distribution, which we denote $\mbE$, and which turns out to be a left-invariant distribution on a particular Lie group. In fact, Doubrov and Govorov reported finding this missing distribution to the author during his preparation of an earlier article \cite{WillseHeisenberg} that relied on Cartan's classification; this distribution was discussed briefly in Example 38 and Remark 39 in that article, but detailed discussion was deferred to the present article.

Nurowski showed that any $(2, 3, 5)$ distribution $(M, \mbD)$ canonically induces a conformal structure $\mbc_{\mbD}$ of signature $(2, 3)$ on $M$ \cite{NurowskiDifferential}, and he later showed with Leistner that one can exploit certain distributions of this type to produce explicit \textit{ambient metrics} $\wtg$ with metric holonomy $\Hol(\wtg)$ equal to $\G_2^*$. The ambient metric construction assigns to a conformal structure $(M, \mbc)$ of signature $(p, q)$ an essentially unique Riemannian metric $(\wtM, \wtg)$ of signature $(p + 1, q + 1)$; see \S\ref{subsection:ambient-metric-construction}. Though the ambient metric construction enjoys a satisfying existence theorem, producing an explicit ambient metric for a particular conformal structure amounts to solving a formidable nonlinear system of partial differential equations on $M$, and consequently explicit closed forms for ambient metrics are known only for a few special classes of conformal structures \cite{AndersonLeistnerNurowski}, \cite[\S3]{FeffermanGraham}, \cite[Theorem 2.1]{GoverLeitner}, \cite{LeistnerNurowskiPPwave}.

We observe in \S\ref{subsection:ambient-left-invariant} that in the special case that $\mbc$ is a left-invariant conformal structure on a Lie group, after making certain natural choices the system becomes one of \textit{ordinary} differential equations. In particular this applies to the left-invariant conformal structure $\mbc_{\mbE}$ determined by $\mbE$; here we abuse notation by using the same notation $\mbE$ for a particular real distribution, as well as Doubrov and Govorov's missing complex distribution, which is in a sense that can be made precise a complexification of that real distribution. In \S\ref{section:new-explicit-ambient-metric} we manage to solve explicitly the system of ordinary differential equations determined by $\mbc_{\mbE}$ and hence produce for it an explicit ambient metric $\wtg_{\mbE}$, and its behavior turns out to be qualitatively different from previous explicit examples; see Remark \ref{remark:nonpolynomial}. Furthermore, computing using the Ambrose-Singer Theorem gives that the holonomy group of $\wtg_{\mbE}$ is $\G_2^*$, furnishing a new example a metric with that exceptional holonomy group. Finally, we apply some general facts relating ambient metrics to projective structures whose normal tractor connections enjoy orthogonal holonomy reductions \cite{GoverPanaiWillse} to produce from $\wtg_{\mbE}$ an explicit example of a projective structure with normal projective holonomy $\G_2^*$.


Many computations whose results are reported here were carried out with Ian Anderson's Maple Package \texttt{DifferentialGeometry}.

\thanks{It is a pleasure to thank Mike Eastwood, who encouraged the author to describe this example in a standalone article and offered helpful comments during its preparation. It is also a pleasure to thank Boris Doubrov for several illuminating discussions, and to thank Matthew Randall for several corrections. I am grateful, too, to referees for several helpful suggestions and another correction. The author gratefully acknowledges support from the Australian Research Council and the Austrian Science Fund (FWF), the latter via project P27072--N25.}

\section{$(2, 3, 5)$ distributions}\label{section:235-distributions}

Given two distributions $\mbD, \mbD' \subset TM$ on a (real or complex) manifold, their bracket $[\mbD, \mbD']$ is the set $[\mbD, \mbD'] := \coprod_{u \in M} \set{[X, Y]_u : X \in \Gamma(\mbD), Y \in \Gamma(\mbD')}$. (A priori this set need not be a distribution, as the rank of the vector space $[\mbD, \mbD'] \cap T_u M$ may vary with $u$.) Short computations give that for a distribution $\mbD$ we have $\mbD \subseteq [\mbD, \mbD]$, and that if $[\mbD, \mbD]$ is a distribution, then $[\mbD, \mbD] \subseteq [\mbD, [\mbD, \mbD]]$. (Implicit in the notation $[\mbD, [\mbD, \mbD]]$ is the requirement that $[\mbD, \mbD]$ itself be a distribution.)

For reasons including those mentioned in the introduction, the class of distributions that the current article concerns is especially interesting:

\begin{definition}
A \textbf{$(2, 3, 5)$ distribution}\footnote{Elsewhere $(2, 3, 5)$ distributions are sometimes instead termed \textit{generic} $2$-plane distributions (on $5$-manifolds) \cite{CapSlovak,Sagerschnig}.} is a $2$-plane distribution $\mbD$ on a (real or complex) $5$-manifold $M$ that satisfies the genericity condition
\[
	[\mbD, [\mbD, \mbD]] = TM \textrm{.}
\]
Two such distributions $(M, \mbD)$ and $(M', \mbD')$ are \textbf{equivalent} iff there is a diffeomorphism $\phi: M \to M'$ such that $T\phi \cdot \mbD = \mbD'$. They are \textbf{locally equivalent (near $u \in M$ and $u' \in M'$)} if there are neighborhoods $U$ of $u$ and $U'$ of $u'$ such that $(U, \mbD|_U)$ and $(U', \mbD'|_{U'})$ are equivalent via a diffeomorphism that maps $u$ to $u'$.
\end{definition}

The geometry of these structures was first studied systematically by Cartan, in his well-known ``Five Variables'' article \cite{Cartan}. There, he solved the equivalence problem for this geometry by canonically assigning to each such distribution $(M, \mbD)$ a principal $Q$-bundle $E \to M$ and a $\mathfrak{g}_2^*$-valued pseudoconnection $\omega$ on $E$, where $Q$ is a particular parabolic subgroup of $\G_2^*$.\footnote{Since $\G_2^*$ is semisimple and $Q$ is parabolic, this geometry is an example of an important class of structures called \textit{parabolic geometries}. For any such geometry one can encode any structure on a manifold $M$ in a bundle $E \to M$ equipped with a canonically determined Cartan connection; see the standard reference \cite{CapSlovak} for a general treatment of parabolic geometries and \S4.3.2 there for discussion of the geometry of $(2, 3, 5)$ distributions in this setting. Robert Bryant has observed that the pseudoconnection Cartan produces is not in fact a Cartan connection \cite{BryantPrivate}.}

We here consider $(2, 3, 5)$ distributions whose \textit{infinitesimal symmetry algebra} is large:

\begin{definition}
A vector field $X \in \Gamma(TM)$ is an \textbf{infinitesimal symmetry} of a distribution $(M, \mbD)$ iff it preserves $\mbD$ in the sense that $\mcL_X Y \in \Gamma(\mbD)$ for all $Y \in \Gamma(\mbD)$. The Lie algebra $\mfaut(\mbD)$ of all such fields is the \textbf{infinitesimal symmetry algebra} of $(M, \mbD)$.
\end{definition}

\subsection{Monge (quasi)normal form}

This geometry admits a local quasinormal form: Any ordinary differential equation
\begin{equation}\label{equation:monge-normal-form}
    z'(x) = F(x, y(x), y'(x), y''(x), z(x)),
\end{equation}
can be regarded as a differential system on the partial jet space $\bbF^5_{xypqz} := J^{2, 0}(\bbF, \bbF^2)$ (here $\bbF = \bbR$ or $\bbF = \bbC$), where the variables $p$ and $q$ are placeholders respectively for $y'$ and $y''$, comprising the canonical forms $dy - p \,dx$ and $dp - q \,dx$, and the particular form $dz - F(x, y, p, q, z) \,dx$\textrm{.}
These forms are linearly independent at each point, so their common kernel $\mbD_F$ is a $2$-plane distribution on $\bbF^5_{xypqz}$, namely,
\begin{equation}
    \mbD_F = \langle \partial_x + p \partial_y + q \partial_p + F(x, y, p, q, z) \partial_z, \partial_q \rangle \textrm{.}
\end{equation}
Concretely, a pair $(y(x), z(x))$ is a solution of the equation \eqref{equation:monge-normal-form} iff its prolongation $x \mapsto (x, y(x), y'(x), y''(x), z(x))$
is an integral curve of $\mbD_F$. Computing directly shows that $\mbD_F$ is a $(2, 3, 5)$ distribution iff the second derivative $F_{qq}$ vanishes nowhere.

Goursat showed that every $(2, 3, 5)$ distribution is locally equivalent (around any point) to $\mbD_F$ for some $F$ \cite[\S76]{Goursat}. A distribution $\mbD_F$ specified by a function $F$ is said to be in \textbf{Monge (quasi)normal form}.

\subsection{The canonical conformal structure}\label{subsection:canonical-conformal-strucuture}

Nurowski showed that any real $(2, 3, 5)$ distribution $(M, \mbD)$ determines a canonical conformal structure $\mbc_{\mbD}$ on the underlying manifold $M$ \cite[\S5.3]{NurowskiDifferential}, and the argument in that reference shows that Nurowski's construction applies just as well to establish that a complex $(2, 3, 5)$ distribution induces a complex conformal structure on the underlying manifold.

Nurowski also gave an explicit (and, with a length of about 60 terms, daunting) formula \cite[(54)]{NurowskiDifferential} for a representative of the conformal structure $\mbc_{\mbD_F}$ of a distribution $\mbD_F$ in Monge normal form; its co\"{e}fficients in the co\"{o}rdinate coframe are polynomials of degree $6$ and lower in the $4$-jet of $F$.

\section{A missing distribution}
\label{section:missing-distribution}

\subsection{Cartan's ostensible classification}
\label{subsection:ostensible-classification}
In \cite{Cartan} Cartan claimed to classify, up to local equivalence, and implicitly in the complex setting, all $(2, 3, 5)$ distributions whose infinitesimal symmetry algebra has dimension at least $6$ and which have constant \textit{root type}: The fundamental curvature invariant of a $(2, 3, 5)$ distribution $(M, \mbD)$---analogous to the Riemann curvature tensor in Riemannian geometry---is a section $A \in \Gamma(\bigodot^4 \mbD^*)$. The value of $A$ at $u \in M$ depends on the $4$-jet of $\mbD$ at $u$, and the quantity $A$ is natural in the sense that it is preserved by diffeomorphism. The \textbf{root type} of $\mbD$ at $x \in M$ is just the collection of multiplicities of the zeros of $A_x$ viewed as a polynomial on the projective line $\bbP(\mbD)$, or if $\mbD$ is real, on $\bbP(\mbD \otimes \bbC)$.\footnote{Cf.\ the notion of Petrov type in four-dimensional Lorentzian geometry.} A distribution has \textbf{constant root type} if the root type is the same for every point, and of course, homogeneous $(2, 3, 5)$ distributions have this property.

The structures he found were the following, organized by root type. (The section numbers indicate the corresponding (sub)sections in \cite{Cartan}.)
\begin{itemize}
	\item (\S8: $A$ identically zero) The \textit{flat model} $(\G_2^* / Q, \Delta)$ of the geometry, which can be realized locally (1) via the algebra of the split octonions \cite{Sagerschnig}, (2) as the so-called \textit{rolling distribution} for two spheres whose radii have ratio $3 : 1$ (see the item for $\S50$ below for references), and (3) as the distribution given in Monge normal form $F(x, y, p, q, z) = q^2$ \cite{NurowskiDifferential}; $\mfaut(\Delta) = \mfg_2^*$.
    \item (\S9: $A$ has a single quadruple root at each point) A class of distributions $\mbD_I$ specified by a single function $I$ of one variable. If $I$ is constant then $\mbD_I$ is homogeneous and $\dim \mfaut(\mbD_I) = 7$; otherwise $\dim \mfaut(\mbD_I) = 6$ and $\mbD_I$ is not homogeneous. In both cases $\mfaut(\mbD_I)$ is solvable.
   	\item (\S11: $A$ has two double roots at each point) A family of distributions parameterized by one complex parameter. Up to isomorphism, three different infinitesimal symmetry algebras occur, all of which have dimension $6$.
   		\begin{itemize}
   			\item (\S50) A general distribution in the family has infinitesimal symmetry algebra isomorphic to $\mfso(3, \bbC) \oplus \mfso(3, \bbC)$. This family includes appropriate complexifications of the rolling distributions of two real surfaces of different nonzero constant curvature whose curvatures do not have ratio $9 : 1$; for that ratio the rolling distribution is flat. See \cite{Agrachev,BaezHuerta,BorMontgomery,BryantHsu,Zelenko} for much more.
   			\item (\S51) There is a single distribution with infinitesimal symmetry algebra isomorphic to $\mfso(3, \bbC) \oplus (\mfso(2, \bbC) \ltimes \bbC^2)$.
   			\item (\S52) There is a single distribution with infinitesimal symmetry algebra isomorphic to $\mfso(3, \bbC) \ltimes \bbC^3$. This distribution is locally equivalent to the appropriate complexification of the rolling distribution of a real surface of constant curvature and a plane.
   		\end{itemize}
\end{itemize}

\subsection{Doubrov--Govorov's distribution}
\label{subsection:doubrov-example}

Some 103 years after Cartan's work, Doubrov and Govorov found a complex $(2, 3, 5)$ distribution $\mbE$ that has $6$-dimensional infinitesimal symmetry algebra but which is missing from Cartan's classification.

Define a Lie algebra structure $\mfh$ on $\bbF^5$ ($\bbF = \bbR$ or $\bbF = \bbC$) by the bracket multiplication table
\begin{equation}\label{equation:bracket-relations}
	\begin{array}{c|ccccc}
		[\,\cdot\, , \,\cdot\,]    & E_1 & E_2 & E_3 & E_4 & E_5 \\
	\hline
		E_1 & \cdot & \cdot & \cdot & \cdot & \cdot \\
		E_2 & & \cdot & \cdot & E_1 & -E_2 \\
		E_3 & & & \cdot & E_2 & -2 E_3 \\
		E_4 & & & & \cdot & E_4 \\
		E_5 & & & & & \cdot
	\end{array} \quad ,
\end{equation}
where $(E_a)$ is any basis, and define $\mbE_0 := \langle E_1 + E_3 + E_4, E_5 \rangle$\textrm{.}
Then, let $H$ be a connected Lie group with Lie algebra $\mfh$, regard $\mbE_0$ as a subspace of $T_{\id} H$ via its identification with $\mfh$, and let $\mbE \subset TH$ be the left-invariant $2$-plane distribution on $H$ characterized by $\mbE_{\id} = \mbE_0$. Via a usual abuse of notation, let $(E_a)$ also denote the left-invariant frame on $H$ whose restriction to $\id \in H$ is the basis $(E_a) \subset \mfh \cong T_{\id} H$; then, computing using the above bracket relations gives $[\mbE, \mbE] = \langle E_1 + E_3, E_4, E_5 \rangle$ and $[\mbE, [\mbE, \mbE]] = TH ,$ and so $\mbE$ is a $(2, 3, 5)$ distribution.

Doubrov and Govorov integrated these structure equations and produced a Monge normal form for the distribution, given by the function $F_{\mbE}(x, y, p, q, z) := y + q^{1 / 3}$, on a suitable domain. For readability, we use co\"{o}rdinates $(x, y, p, r, z)$, where $q = r^3$.

One can verify directly that $F_{\mbE}$ defines a Monge normal form for $\mbE$ by showing that the frame $L_a$ defined by
\[
    L_1 :=     \partial_z , \quad
    L_2 := r   \partial_y , \quad
    L_3 := r^2 \partial_p , \quad
    L_4 := \tfrac{1}{r} (\partial_x + p \partial_y + y \partial_z) , \quad
    L_5 := r   \partial_r
\]
satisfies the bracket relations of $(E_a)$ and that the identifications $E_a \leftrightarrow L_a$ identify $\mbE$ with the distribution determined by $F_{\mbE}$ (we henceforth make these identifications locally).

Since $\mbE$ is left-invariant, its infinitesimal symmetry algebra $\mfaut(\mbE)$ contains the Lie algebra of right-invariant vector fields on $H$; one basis of this subalgebra is
\[
	\left(
		\partial_z ,
	    \partial_y + x \partial_z ,
	    x \partial_y + \partial_p + \tfrac{1}{2} x^2 \partial_z ,
	    \partial_x ,
	    -x \partial_x + y \partial_y + 2 p \partial_p + r \partial_r
	\right) \textrm{.}
\]
Direct computation shows that the vector field $-y \partial_x + p^2 \partial_p + p r \partial_r - \tfrac{1}{2} y^2 \partial_z$, which is not right-invariant, is also in $\mfaut(\mbE)$. Tedious verification shows that this field and the right-invariant fields together span the infinitesimal symmetry algebra and establishes that $\mfaut(\mbE) \cong \mfsl(2, \bbF) \ltimes \mfn_3$, where $\mfn_3$ is the $3$-dimensional Heisenberg algebra over $\bbF$, and hence $\dim \mfaut(\mbE) = 6$.

\begin{proposition}\cite{DoubrovGovorovCounterexample}\label{proposition:counterexample}
The (complex) homogeneous $(2, 3, 5)$ distribution $(H, \mbE)$ is not locally equivalent to any of the distributions in Cartan's list. In particular, Cartan's list is incomplete.
\end{proposition}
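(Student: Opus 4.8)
The plan is to use the abstract isomorphism type of the infinitesimal symmetry algebra, together with local homogeneity, as local invariants that separate $\mbE$ from every entry on Cartan's list. First I would record the guiding principle: a local equivalence $f\colon (U, \mbD|_U) \to (U', \mbD'|_{U'})$ pushes infinitesimal symmetries forward to infinitesimal symmetries, so it induces an isomorphism of the corresponding germs of symmetry algebras. Hence both $\dim \mfaut$ and the abstract Lie-algebra structure of $\mfaut$ are invariants of local equivalence, as is the property of being locally homogeneous (which holds for $\mbE$ because the right-invariant fields $R_1, \dots, R_5$ already span $T_u H$ at every $u$, so $\mfaut(\mbE)$ acts locally transitively). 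Because these are homogeneous real-analytic models, their germs of symmetries have constant stalk isomorphic to the abstract algebra $\mfaut$, so it is legitimate to compare abstract Lie algebras. Since $\mfaut(\mbE) \cong \mfsl(2, \bbC) \rtimes \mfn_3$ has dimension $6$, it then suffices to rule out those entries on the list that are both locally homogeneous and carry a six-dimensional symmetry algebra.

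Next I would work through the list. The flat cases $\mbD_{1/3}, \mbD_3$ have $14$-dimensional symmetry algebra $\cong \mfg_2^*$ and are excluded by dimension (equivalently, $\mbE$ is not flat). For the remaining rolling distributions $\mbD_{\rho}$ the symmetry algebra is the semisimple $\mfso(3, \bbC) \times \mfso(3, \bbC)$, whereas $\mfsl(2, \bbC) \rtimes \mfn_3$ has nonzero radical $\mfn_3$; a semisimple algebra cannot be isomorphic to one with nonzero radical, so these are excluded. For the two remaining six-dimensional entries I would compare the radical and nilradical, each of which is an isomorphism invariant. The sphere--plane algebra $\mfso(3, \bbC) \rtimes \bbC^3$ has abelian radical $\bbC^3$, while the radical of $\mfaut(\mbE)$ is the non-abelian Heisenberg algebra $\mfn_3$; and the algebra $\mfso(3, \bbC) \times (\mfso(2, \bbC) \rtimes \bbC^2)$ has $2$-dimensional nilradical $\bbC^2$ and a Levi factor that commutes with its radical, whereas $\mfaut(\mbE)$ has $3$-dimensional nilradical $\mfn_3$ on which the Levi factor $\mfsl(2, \bbC)$ acts nontrivially. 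In each case the invariants fail to match, so no isomorphism exists.

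Finally I would dispatch the family $\mbD_I$. When $I$ is locally constant, $\dim \mfaut(\mbD_I) = 7 \neq 6$, so dimension excludes it; when $I$ is non-constant, $\mbD_I$ is not locally homogeneous while $\mbE$ is, so local homogeneity excludes it. Having ruled out every entry on Cartan's list, I conclude that $\mbE$ is not locally equivalent to any of them, and therefore that the list is incomplete.

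I expect the main obstacle to lie not in any single comparison but in the bookkeeping of the structural invariants of the six-dimensional symmetry algebras: the bare dimension $6$ is shared by three different cases and so cannot by itself separate them, and one must verify that each listed semidirect product is presented with its genuine Levi decomposition so that the computations ``abelian versus Heisenberg radical'' and ``nilradical of dimension $2$ versus $3$'' are correct. A secondary point requiring care is the justification that the abstract symmetry algebra is a bona fide local invariant for these homogeneous models, i.e.\ that germs of local symmetries do not enlarge $\mfaut$ at special points; this follows from the finite-dimensionality and rigidity of the symmetry algebra of a $(2, 3, 5)$ distribution, but should be stated explicitly.
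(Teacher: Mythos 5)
Your proposal is correct, but there is no in-paper argument to compare it against: the paper states this proposition with the citation \cite{DoubrovGovorovCounterexample} and defers the proof entirely to Doubrov and Govorov, so your write-up supplies reasoning the paper only references. Your route---treating the abstract isomorphism type of the (germ of the) infinitesimal symmetry algebra, together with local homogeneity, as invariants of local equivalence---is the natural one, and your case analysis against the list in \S\ref{subsection:ostensible-classification} is sound: semisimplicity rules out $\mfso(3, \bbC) \times \mfso(3, \bbC)$ and the flat ($14$-dimensional) cases; the radical of $\mfaut(\mbE) \cong \mfsl(2, \bbC) \rtimes \mfn_3$ is the non-abelian Heisenberg algebra, ruling out $\mfso(3, \bbC) \rtimes \bbC^3$, whose radical $\bbC^3$ is abelian (this is the genuinely delicate comparison, since both algebras have Levi factor $\mfsl(2, \bbC) \cong \mfso(3, \bbC)$ and a $3$-dimensional radical, so dimension and Levi type alone cannot separate them); the nilradical dimensions $3 \neq 2$ rule out $\mfso(3, \bbC) \times (\mfso(2, \bbC) \rtimes \bbC^2)$; and dimension rules out $\mbD_I$ with $I$ locally constant. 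Two refinements would tighten the argument. First, for $\mbD_I$ with $I$ not locally constant, argue germ-wise rather than quoting global non-homogeneity: a local equivalence with $\mbE$ near a point $u$ would make $\mbD_I$ locally homogeneous near $u$; since infinitesimal symmetries preserve the scalar invariant $I$, local transitivity forces $I$ to be constant near $u$, landing you back in the $7$-dimensional case and contradicting $\dim \mfaut(\mbE) = 6$. Second, the legitimacy of comparing the globally computed algebras (rather than stalks of the symmetry sheaf) rests on the fact that real-analytic or holomorphic $(2, 3, 5)$ distributions are Cartan geometries, hence of finite type, so germs of infinitesimal symmetries extend along paths and the germ algebra at every point of a simply connected homogeneous model coincides with the global one; you flag this correctly as the point needing justification, and it is a standard extension theorem, but the proposition's force does depend on it.
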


\begin{proof}
Computing gives that the fundamental curvature of $\mbE$ is $A_{\mbE} = \frac{1}{4} (e^4)^4 \vert_{\mbE}$ (here, $(e^a)$ is the left-invariant coframe on $H$ dual to $(E_a)$), so at each point it has a quadruple root at $[E_5] \in \bbP(\mbE)$. But there is no homogeneous distribution $\mbD$ in Cartan's list with this (constant) root type for which $\dim\mfaut(\mbD) = 6$, so $\mbE$ is not equivalent to any distribution on that list.
\end{proof}

\begin{remark}
One can establish Proposition \ref{proposition:counterexample} without the work of verifying that $\dim\mfaut(\mbE)$ is not larger than $6$ by observing that all of the distributions $\mbD$ in Cartan's list for which $A_{\mbD}$ has a quadruple root at every point have solvable infinitesimal symmetry algebras, whereas $\mfaut(\mbE)$ contains the simple subalgebra $\mfsl(2, \bbC)$ and so is not solvable.
\end{remark}

\section{Ambient metrics}\label{section:ambient-metrics}

The Fefferman-Graham ambient construction associates to a conformal structure $(M, \mbc)$ of signature $(p, q)$, $\dim M = p + q \geq 2$, a unique pseudo-Riemannian metric $(\wtM, \wtg)$ of signature $(p + 1, q + 1)$. When $\dim M$ is odd, this construction is essentially unique, and hence invariants of $\wtg$ (that are independent of the choices made) are also invariants of the underlying conformal structure; indeed, this was the original motivation for the construction \cite{FeffermanGraham}.

\subsection{The Fefferman-Graham ambient metric construction}\label{subsection:ambient-metric-construction}

We describe the ambient metric construction for odd dimension $n > 1$ following \cite{FeffermanGraham}.

Fix a conformal structure $\mbc$ of dimension $n$. The \textbf{metric bundle} associated to $\mbc$ is the ray bundle $\pi: \mcG \to M$ defined by
\[
	\mcG := \coprod_{u \in M} \set{g_u : g \in \mbc}.
\]
The bundle $\mcG$ enjoys a natural topology and smooth structure such that its smooth sections are precisely the representative metrics of $\mbc$. The natural dilation action $\bbR_+ \times \mcG \to \mcG$ defined by $s \cdot g_u = \delta_s(g_u) := s^2 g_u$ realizes $\mcG$ as a principal $\bbR_+$-bundle. Also, $\mcG$ admits a tautological symmetric $2$-tensor $\smash{\mbg_0 \in \Gamma(\bigodot^2 T^* \mcG)}$ defined by
\[
    (\mbg_0)_{g_u} (\xi, \eta) = g_u(T_{g_u} \pi \cdot \xi, T_{g_u} \pi \cdot \eta) \textrm{;}
\]
$\mbg_0$ annihilates $\pi$-vertical vectors and hence is degenerate. By construction, $\delta_s^* \mathbf{g}_0 = s^2 \mathbf{g}_0$.

Fixing a representative $g \in \mbc$ trivializes $\mcG \leftrightarrow \bbR_+ \times M$ via the identification $t^2 g_u \leftrightarrow (t, u)$. With respect to this trivialization, the tautological $2$-tensor is given by $\mbg_0 = t^2 \pi^* g$ and the dilations by $\delta_s: (t, u) \mapsto (st, u)$.

Now, consider the space $\mcG \times \bbR$ and denote the standard co\"{o}rdinate on $\bbR$ by $\rho$. Then, the map $\iota: \mcG \hookrightarrow \mcG \times \bbR$ defined by $z \mapsto (z, 0)$ embeds $\mcG$ as a hypersurface in $\mcG \times \bbR$ and we identify $\mcG$ with this hypersurface. The dilations $\delta_s$ extend trivially to $\mcG \times \bbR$, that is, by $s \cdot (g_u, \rho) = \delta_s (g_u, \rho) := (s^2 g_u, \rho)$. Likewise, a choice of representative $g \in \mbc$ determines a trivialization $\mcG \times \bbR \leftrightarrow \bbR_+ \times M \times \bbR$ that identifies $(t^2 g_u, \rho) \leftrightarrow (t, u, \rho)$, which in turn defines an embedding $M \hookrightarrow \mcG \times \bbR$ by $u \mapsto (1, u, 0)$ and yields an identification $T(\mcG \times \bbR) \cong \bbR \oplus TM \oplus \bbR$. As is conventional, we denote indices corresponding to the factor $\bbR_+$ by $0$, those corresponding to $M$ by lowercase Latin letters, $a, b, c, \ldots$, and those corresponding to the factor $\bbR$ by $\infty$.

A smooth metric $\wtg$ of signature $(p + 1, q + 1)$ on an open neighborhood $\wtM$ of $\mcG$ in $\mcG \times \bbR$ invariant under the dilations $\delta_s$, $s \in \bbR_+$, is a \textbf{pre-ambient metric} for $(M, \mbc)$ if
\begin{enumerate}
    \item it extends $\mbg_0$ in the sense that $\iota^* \wtg = \mbg_0$, and
    \item it is homogeneous of degree $2$ with respect to the dilations $\delta_s$, that is, $\delta_s^* \wtg = s^2 \wtg$.
\end{enumerate}
A pre-ambient metric is \textbf{straight} if for all $u \in \wtM$ the parameterized curve $\bbR_+ \to \wtM$ defined by $s \mapsto s \cdot u$ is a geodesic. Any (nonempty) conformal structure admits many pre-ambient metrics; Cartan's normalization condition for a conformal connection \cite{CartanConformal} suggests that Ricci-flatness is a natural distinguishing criterion.

\begin{definition}\label{definition:ambient-metric}
Let $(M, \mbc)$ be a conformal manifold of odd dimension at least $3$. An \textbf{ambient metric} for $(M, \mbc)$ is a straight pre-ambient metric $\wtg$ for $(M, \mbc)$ such that the Ricci curvature $\wtR$ of $\wtg$ is $O(\rho^{\infty})$; the pair $(\wtM, \wtg)$ is an \textbf{ambient manifold} for $(M, \mbc)$.
\end{definition}
Here, we say that a tensor field on $\wtM$ is $O(\rho^{\infty})$ if it vanishes to infinite order in $\rho$ at each point in the zero set $\mcG$ of $\rho$.

We formulate Fefferman--Graham's existence and uniqueness results for ambient metrics of odd-dimensional conformal structures as follows:
\begin{theorem}\label{theorem:Fefferman-Graham}\cite{FeffermanGraham}
Let $(M, \mbc)$ be a conformal manifold of odd dimension at least $3$. There is an ambient metric for $(M, \mbc)$, and it is unique up to infinite order: If $\wtg_1$ and $\wtg_2$ are ambient metrics for $(M, \mbc)$, then (after possibly restricting the domains of both to appropriate open neighborhoods of $\mcG$ in $\wtM$) there is a diffeomorphism $\Phi$ such that $\Phi\vert_{\mcG} = \id_{\mcG}$ and $\Phi^* \wtg_2 - \wtg_1$ is $O(\rho^{\infty})$.
\end{theorem}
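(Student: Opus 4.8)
The plan is to follow the structure of Fefferman and Graham's original argument: pass to a normal form adapted to the dilations, convert the Ricci condition into a recursion on the Taylor coefficients of the metric in $\rho$, and then exploit the parity of $\dim M$ to solve that recursion. First I would fix a representative $g \in \mbc$, yielding the trivialization $\mcG \times \bbR \leftrightarrow \bbR_+ \times M \times \bbR$ with coordinates $(t, x, \rho)$, and let $T = t\partial_t$ be the infinitesimal generator of the dilations $\delta_s$. I would then establish a normal-form lemma: after applying a unique dilation-equivariant diffeomorphism fixing $\mcG$ pointwise, every straight pre-ambient metric takes the form
\[
\wtg = 2\rho\, dt^2 + 2t\, dt\, d\rho + t^2 g_{ij}(x, \rho)\, dx^i\, dx^j,
\]
with $g_{ij}(x, 0)$ equal to the chosen representative $g$. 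The homogeneity $\delta_s^* \wtg = s^2 \wtg$ forces the displayed $t$-dependence, while straightness --- that the $\delta_s$-orbits are geodesics --- is precisely what eliminates the $dx^i\, d\rho$ and $d\rho^2$ terms and fixes the $dt^2$ and $dt\, d\rho$ coefficients; the reduction is effected by flowing along the geodesics issuing from $\mcG$ transverse to it.

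Next I would compute the Ricci curvature of $\wtg$ in this normal form. A direct calculation expresses the components of $\wtR$ in terms of the family $g_\rho := g_{ij}(x, \rho)$, its $\rho$-derivatives $\partial_\rho g_\rho$ and $\partial_\rho^2 g_\rho$, and the Ricci tensor $\Ric(g_\rho)$ of $g_\rho$ viewed as a metric on $M$. Writing $n := \dim M$, the tangential block yields the governing equation, schematically
\[
\rho\, \partial_\rho^2 g_{ij} + \Ric(g_\rho)_{ij} - \tfrac{1}{2}(n - 2)\, \partial_\rho g_{ij} + (\text{terms quadratic in } \partial_\rho g_\rho) = 0,
\]
together with subsidiary first-order equations arising from the mixed and $d\rho^2$ components. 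Imposing $\wtR = O(\rho^\infty)$ is then equivalent to requiring this system to hold to infinite order in $\rho$ along $\mcG$.

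I would solve the system by a formal power series $g_{ij}(x, \rho) = \sum_{k \geq 0} \tfrac{1}{k!} g^{(k)}_{ij}(x)\, \rho^k$ with $g^{(0)} = g$. Differentiating the tangential equation $k$ times and evaluating at $\rho = 0$ produces a relation of the shape $(n - 2k)\, g^{(k)}_{ij} = P^{(k)}_{ij}(g^{(0)}, \dots, g^{(k-1)})$, where $P^{(k)}$ is an explicit expression in the lower-order coefficients. This is the crux of the argument, and the step I expect to be the main obstacle: because $\dim M$ is odd, the integer $n - 2k$ never vanishes for $k \geq 1$, so each $g^{(k)}_{ij}$ is uniquely and consistently determined by its predecessors. (In even dimension this factor vanishes at $k = n/2$, obstructing the recursion --- this is exactly the Fefferman--Graham obstruction that the odd-dimensional hypothesis circumvents.) Verifying that the recursion coefficient is precisely $n - 2k$, up to a harmless normalization, is what makes the entire theorem go through.

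Having produced a formal solution, I would invoke Borel's lemma to realize a smooth $g_{ij}(x, \rho)$ with exactly the prescribed Taylor coefficients; the corresponding normal-form metric is Ricci-flat to infinite order and is the sought ambient metric, giving existence. For uniqueness, I would put two given ambient metrics $\wtg_1$ and $\wtg_2$ into normal form, using the same representative $g$, via dilation-equivariant diffeomorphisms $\Phi_1, \Phi_2$ fixing $\mcG$. Since the same recursion shows that the Taylor coefficients of a normal-form ambient metric are determined to all orders by the common datum $g$, the two normalized metrics agree to infinite order in $\rho$; then $\Phi := \Phi_1 \circ \Phi_2^{-1}$, suitably restricted, satisfies $\Phi|_{\mcG} = \id_{\mcG}$ and $\Phi^* \wtg_2 - \wtg_1 = O(\rho^\infty)$, as required.
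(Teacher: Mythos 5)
Your strategy---normal form, term-by-term recursion in $\rho$, parity of $n$---is exactly the Fefferman--Graham argument that the paper invokes (the paper cites \cite{FeffermanGraham} and only sketches this recursion in the paragraph following the theorem), but your crux step contains a genuine error. In the tangential component of the Ricci tensor, the term $-\tfrac{1}{2} g^{cd} g_{cd}'\, g_{ab}$ (see \eqref{equation:ambient-metric-PDE}) is \emph{linear} in $\partial_\rho g$, not quadratic, so it cannot be absorbed into your ``terms quadratic in $\partial_\rho g_\rho$.'' Keeping it, the linearization of the order-$k$ step of the recursion is not the scalar $n - 2k$ acting on $g^{(k)}$ but the map
\[
h_{ab} \longmapsto \left(k - \tfrac{n}{2}\right) h_{ab} - \tfrac{1}{2} \left(g^{cd} h_{cd}\right) g_{ab} \textrm{,}
\]
whose tracefree part has eigenvalue $k - \tfrac{n}{2}$ but whose pure-trace part has eigenvalue $k - n$. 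Oddness of $n$ removes only the first degeneracy; the second occurs at $k = n$ \emph{regardless of parity}. So the tangential equation alone determines $g^{(k)}$ for $k < n$ and only the tracefree part of $g^{(n)}$; your claim that ``each $g^{(k)}_{ij}$ is uniquely and consistently determined by its predecessors'' fails at $k = n$, and with it both your existence and your uniqueness arguments as written (uniqueness is exactly where the undetermined trace would bite, since it would leave a one-function ambiguity in the formal solution).

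The repair is the one the paper records after the theorem statement: the trace part $g^{ab} \partial_\rho^n g_{ab}\vert_{\rho = 0}$ is determined by the $\wtR_{\infty\infty}$ component of the system---the $d\rho^2$ equation you list as ``subsidiary'' but never use---and one must verify (as Fefferman--Graham do, via the contracted Bianchi identity) that this choice is consistent with the tangential and mixed equations, after which the tangential equation again determines all derivatives of order greater than $n$. With that correction, the rest of your outline (the straightening/normal-form lemma, Borel's lemma to realize the formal solution as a smooth metric Ricci-flat to infinite order, and uniqueness by normalizing both metrics relative to the same representative $g$) is sound and matches the cited proof.
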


The proof of the theorem uses the fact that for any representative $g \in \mbc$ we may write any ambient metric $\wtg$ (possibly after restricting to an open, dilation-invariant set containing $\mcG$) in the \textit{normal form}
\begin{equation}\label{equation:normal-form}
    \wtg = 2 \rho \, dt^2 + 2 t \, dt \, d\rho + t^2 g(u, \rho)
\end{equation}
where $g(u, 0) = g$ and where we may regard $g(u, \rho)$ as a family of metrics on $M$ depending on the parameter $\rho$; here and henceforth we suppress notation for pullback by the inclusion $M \hookrightarrow \wtM$ determined by $g$.

We want to write the Ricci-flatness condition $\wtR = 0$ for a metric $\wtg$ on $\wtM$ in normal form: The components $\wtR_{00}, \wtR_{0a}, \wtR_{0\infty}$ of $\wt{\Ric} := \Ric^{\wtg}$ with respect to the splitting $\bbR_+ \times M \times \bbR$ of $\mcG \times \bbR \supseteq \wtM$ are zero. The vanishing of the remaining components $\wtR_{ab}, \wtR_{a\infty}, \wtR_{\infty\infty}$ is equivalent to a system of $\frac{1}{2}(n + 2)(n + 1)$ partial differential equations in $g_{ab}$; these components are given by \cite[(3.17)]{FeffermanGraham}
\begin{align}\label{equation:ambient-metric-PDE}
    \wtR_{a     b     } &= \rho g_{ab}'' - \rho g^{cd} g_{ac}' g_{bd}' + \tfrac{1}{2} \rho g^{cd} g_{cd}' g_{ab}' - \left( \tfrac{n}{2} - 1 \right) g_{ab}' - \tfrac{1}{2} g^{cd} g_{cd}' g_{ab} + R_{ab} \nonumber \\
    \wtR_{a     \infty} &= \tfrac{1}{2} g^{cd} (\nabla_c g_{ad}' - \nabla_a g_{cd}') \\
    \wtR_{\infty\infty} &= -\tfrac{1}{2} g^{cd} g_{cd}'' + \tfrac{1}{4} g^{cd} g^{ef} g_{ce}' g_{df}' \nonumber
\textrm{;}
\end{align}
here $g = g(x, \rho)$, $'$ denotes differentiation with respect to $\rho$, and $\nabla_a$ and $R_{ab}$ respectively denote the Levi-Civita connection and Ricci curvature of $g(x, \rho)$ with $\rho$ fixed.

Differentiating these expressions, setting them equal to zero, and then evaluating at $\rho = 0$ successively determines all of the derivatives $\partial_{\rho}^m g\vert_{\rho = 0}$. 
If the representative $g$ is real-analytic, then the Taylor series of $g(u, \rho)$ about $\rho = 0$ (that is, along $\mcG$) converges to a real-analytic ambient metric $\wtg$ on some open subset of $\wtM$ containing $\mcG$, and in particular $\wt\Ric = 0$.

Despite the existence of ambient metrics that Theorem \ref{theorem:Fefferman-Graham} guarantees, explicit examples have been produced for only a few isolated classes of conformal structures \cite[\S3]{FeffermanGraham}, \cite[Theorem 2.1]{GoverLeitner}, \cite{LeistnerNurowskiPPwave}, \cite[\S2]{HSSTZ}, owing in part to the severe nonlinearity of the system $\wt\Ric = 0$. Nurowski produced explicit ambient metrics for the conformal structures induced by a special class of $(2, 3, 5)$ distributions \cite{NurowskiAmbient} and with Leistner showed that most of these metrics have holonomy $\G_2^*$ \cite{LeistnerNurowski}; the same has been accomplished for a proper superset of this family, namely for the conformal structures induced by the distributions given in Monge normal form by the functions $F_{f, h}(x, y, p, q, z) := q^2 + f(x, y, p) + h(x, y) z$ \cite{AndersonLeistnerNurowski}.

\subsection{Ambient metrics of left-invariant conformal structures}\label{subsection:ambient-left-invariant}

The typically intractable problem of computing explicit ambient metrics of particular conformal structures in principle simplifies significantly (but in general remains difficult) in the special case of left-invariant conformal structures on Lie groups. We indicate some of these simplifications here and treat this problem in more detail in a work in progress \cite{WillseLeftInvariant}.

Given a Lie group $G$, let $L_h : G \to G$ be the left multiplication map $k \mapsto hk$. A conformal structure $\mbc$ on a Lie group $G$ is \textbf{left-invariant} iff $L_h^* \mbc = \mbc$ for all $h \in G$, that is, iff for any (equivalently, every) representative metric $g \in \mbc$ we have $L_h^* g \in \mbc$. Any such conformal structure contains a distinguished $1$-parameter family of representative metrics, namely the left-invariant ones: Any $g_0 \in \mbc\vert_{\id}$ determines a unique left-invariant metric $g$ satisfying $g\vert_{\id} = g_0$, and by left-invariance $g\vert_h \in \mbc\vert_h$ for all $h \in G$, that is, $g \in \mbc$. Since the choice of $g_0 \in \mbc\vert_{\id}$ is arbitrary, all left-invariant metrics in $\mbc$ arise this way.

The problem of finding an explicit expression for the ambient metric simplifies in a critical way when writing the ambient metric in normal form \eqref{equation:normal-form} with respect to a left-invariant representative metric $g$.

\begin{proposition}\label{proposition:left-invariant-ambient-metric}
Let $(G, \mbc)$ be a left-invariant conformal structure on a Lie group of odd dimension $n \geq 3$ and $g \in \mbc$ a left-invariant representative metric, and let $(E_a)$ be any left-invariant frame on $G$.

\begin{enumerate}
	\item There is a real-analytic---and hence Ricci-flat---ambient metric $\wtg$ for $\mbc$ invariant under the trivial extension of the left action of $G$ to the ambient space $\wt G \subseteq \bbR_+ \times G \times \bbR$. In particular, for fixed $\rho$ the quantity $g(h, \rho)$ in the normal form \eqref{equation:normal-form} is a left-invariant metric on $G$, and so the components of $g(h, \rho)$ with respect to the left-invariant frame $(E_a)$ are functions $g_{ab}(\rho)$ of $\rho$ alone.
	\item The components of the ambient metric system $\wt\Ric = 0$ with respect to the ($G$-invariant) frame $(\partial_t, E_a, \partial_{\rho})$ of $T\wt G$ comprise a system of ordinary differential equations in $g_{ab}(\rho)$.
\end{enumerate}
\end{proposition}

Theorem \ref{theorem:Fefferman-Graham} thus guarantees that any ambient metric for a left-invariant conformal structure is, informally, ``invariant to infinite order'' under the $G$-action on $\mcG$ specified therein.

\mbox{}
\begin{proof}~
\begin{enumerate}
	\item Since $g$ is left-invariant it is real-analytic, and hence its local invariants are left-invariant and real-analytic, too, including its Levi-Civita connection $\nabla$, its Ricci curvature $\Ric$, and the covariant derviatives $\nabla^k \Ric$ thereof. On the other hand, \cite[Proposition 3.5]{FeffermanGraham} gives that for an ambient metric in normal form \eqref{equation:normal-form} each of the derivatives $\partial_{\rho}^m g(u, \rho) \vert_{\rho = 0}$ of $g(u, \rho)$ is a certain linear combination of contractions of Ricci curvature and its covariant derviatives, and so in particular these derivatives are left-invariant and real-analytic. Thus, so is the real-analytic function $g(u, \rho)$ they define, and hence substituting this function in \eqref{equation:normal-form} yields a real-analytic ambient metric $\wtg$ with the specified invariance property. By left-invariance of $g_{ab}(h, \rho)$ and $(E_a)$, for all $h \in G$ and coframe elements $E_a, E_b$ we have $g_{ab}(h, \rho)(E_a\vert_h, E_b\vert_h) = g_{ab}(\id, \rho)(E_a\vert_{\id}, E_b\vert_{\id})$, so $g_{ab}(h, \rho)$ does not depend on $h$ and hence we may write it as a function $g_{ab}(\rho)$ of $\rho$ alone.
	\item The condition $\wt\Ric = 0$ is equivalent to the vanishing of $\wtR_{ab}$, $\wtR_{a\infty}$, and $\wtR_{\infty\infty}$, so its suffices to express each of those as differential expressions in $g_{ab}(\rho)$ involving only derivatives with respect to $\rho$. Note that the expression for $R_{\infty\infty}$ in \eqref{equation:ambient-metric-PDE} already has this form.

The only term in the expression for $\wtR_{ab}$ in \eqref{equation:ambient-metric-PDE} that depends on derivatives of $g$ in directions other than $\rho$ is $R_{ab}$, the Ricci curvature. Let $C_{ab}^c$ be the structure constants of $G$ with respect to the frame $(E_a)$, that is, the unique constants that satisfy $[E_a, E_b] = C_{ab}^c E_c$ for all $a, b, c$; by the Koszul formula, the Levi-Civita connection $\nabla$ of a left-invariant metric $g$ on $G$ is given by
\begin{equation}\label{equation:Levi-Civita-left-invariant}
	\nabla_{E_a} E_b = \tfrac{1}{2} \left( C_{ab}^c + g_{ad} C_{eb}^d g^{ec} - g_{bd} C_{ae}^d g^{ec} \right) E_c .
\end{equation}
We can use this formula to express the Riemannian curvature tensor $R_{abcd}$ and hence the Ricci curvature $R_{ab}$ as a sum of formal contractions of the metric $g_{ab}$, its inverse, $g^{ab}$, and the structure constants, $C_{ab}^c$ of $G$. (The formula is unwieldy so we do not reproduce it here.

By computing directly using \eqref{equation:Levi-Civita-left-invariant} again along with the formula for the Levi-Civita connection $\wt\nabla$ of an ambient metric in normal form \eqref{equation:normal-form} we can express the component $\wtR_{a \infty}$ in terms of $g_{ab}$, $g^{ab}$, $g_{ab}'$, and $C_{ab}^c$, as
\[
	\wtR_{a\infty} = \tfrac{1}{2} C_{ab}^d g'_{dc} g^{cb} + \tfrac{1}{2} C_{bc}^b g_{ad}' g^{dc} - \tfrac{1}{4} C_{cd}^b g'_{ef} g_{ab} g^{ce} g^{df} . \qedhere
\]
\end{enumerate}
\end{proof}

\begin{remark}
There is an even-dimensional analogues of Theorem \ref{theorem:Fefferman-Graham}, but it is more subtle: In short, for a conformal manifold of even dimension $n$, both formal existence and uniqueness of ambient metrics are guaranteed roughly only to order $\frac{n}{2}$ in $\rho$ \cite[\S3]{FeffermanGraham}. There is a corresponding even-dimensional version of Proposition \ref{proposition:left-invariant-ambient-metric} but we delay its precise statement to \cite{WillseLeftInvariant}.
\end{remark}

\begin{remark}
Any left-invariant bilinear form on $G$ is determined by its restriction to $T_{\id} G$, which we may identify with the Lie algebra $\mfg$ of $G$. Thus, we can regard the ambient metric system $\wt\Ric = 0$ as an ordinary differential equation on $\bigodot^2 \mfg^*$.
\end{remark}

\begin{remark}
Proposition \ref{proposition:left-invariant-ambient-metric} and the following remarks hold just as well if one replaces \textit{left-invariant} with \textit{right-invariant} in all instances.
\end{remark}

\section{A new explicit ambient metric}\label{section:new-explicit-ambient-metric}

With some effort, one can produce an explicit ambient metric for the conformal structure $(H, \mbc_{\mbE})$. Let $(e^a)$ denote the left-invariant coframe on $H$ dual to the frame $(E_a)$ defined in \S\ref{subsection:doubrov-example}. Then, $\mbc_{\mbE}$ has the representative
\begin{equation}\label{equation:representative-metric}
	g_{\mbE} = -2 (e^1)^2 - 2 e^1 e^3 + (e^2)^2 + 4 e^2 e^5 + 4 e^3 e^4 .
\end{equation}
\begin{proposition}
The metric
\begin{multline}\label{equation:explicit-ambient-metric}
	\wtg_{\mbE}
        = 2 \rho \,dt^2 + 2 t \,dt \,d\rho \\
            \qquad + t^2 \bigg(-\frac{4}{2 + \rho} (e^1)^2 - \frac{4 + 3 \rho}{2 + \rho} e^1 e^3 + (e^2)^2 + 2\sqrt{2 (2 + \rho)} e^2 e^5 \\- \frac{\rho^2}{16 (2 + \rho)} (e^3)^2 + 2 \sqrt{2 (2 + \rho)} e^3 e^4 \bigg)
\end{multline}
on $\wtH := \bbR_+ \times H \times (-2, \infty)$ is an ambient metric for the (real) left-invariant conformal structure $(H, \mbc_{\mbE})$ induced by the (real) distribution $\mbE$.
\end{proposition}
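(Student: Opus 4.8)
The plan is to verify the three defining properties of an ambient metric directly, exploiting the fact that \eqref{equation:explicit-ambient-metric} is already displayed in the normal form \eqref{equation:normal-form} relative to the left-invariant coframe $(e^a)$ dual to $(E_a)$. Write $g(\rho)$ for the $\rho$-dependent bilinear form appearing in the large parentheses of \eqref{equation:explicit-ambient-metric}. The shape $2\rho\,dt^2 + 2t\,dt\,d\rho + t^2 g(\rho)$ of a normal-form metric immediately forces the homogeneity $\delta_s^* \wtg_{\mbE} = s^2 \wtg_{\mbE}$ and straightness, and gives $\iota^* \wtg_{\mbE} = t^2 g(0) = \mbg_0$ provided $g(0)$ is a representative of $\mbc_{\mbE}$. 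Thus there are only two substantive points to settle: that the restriction $g := g(0)$ lies in the conformal class $\mbc_{\mbE}$, and that $\wtg_{\mbE}$ is Ricci-flat (which is stronger than the required $\Ric(\wtg_{\mbE}) = O(\rho^{\infty})$).

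First I would compute $g$ by setting $\rho = 0$ in \eqref{equation:explicit-ambient-metric}, obtaining the left-invariant metric $g = -2(e^1)^2 - 2 e^1 e^3 + (e^2)^2 + 4 e^2 e^5 + 4 e^3 e^4$. A short check shows $g(\rho)$ is nondegenerate of signature $(2,3)$ for every $\rho \in (-2, \infty)$, which both explains the choice of domain $\wtH = \bbR_+ \times H \times (-2, \infty)$ and confirms that $\wtg_{\mbE}$ is a genuine metric (of signature $(3,4)$) there. To confirm $g \in \mbc_{\mbE}$ I would run Nurowski's construction on $\mbE$: using the Monge normal form $F_{\mbE} = y + q^{1/3}$ together with the explicit frame $(L_a)$, one evaluates Nurowski's representative of the canonical conformal structure in the coordinates $(x, y, p, r, z)$, re-expresses it in the coframe $(e^a)$, and checks that it agrees with $g$ up to a (left-invariant) conformal factor. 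Because both $\mbE$ and its induced conformal structure are left-invariant and the latter depends only on the $4$-jet of $\mbE$, this comparison may be carried out pointwise at the identity in terms of the structure constants of $\mfh$.

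For Ricci-flatness I would invoke Proposition \ref{proposition:left-invariant-ambient-metric}: since $g$ is left-invariant, the components $g_{ab}(\rho)$ are functions of $\rho$ alone, so the system \eqref{equation:ambient-metric-PDE} collapses to a system of ordinary differential equations in $\rho$. The components $\wtR_{00}, \wtR_{0a}, \wtR_{0\infty}$ vanish automatically for a normal-form metric, and it remains to substitute the closed-form $g_{ab}(\rho)$ into the three blocks $\wtR_{ab}$, $\wtR_{a\infty}$, $\wtR_{\infty\infty}$ and to verify that each vanishes identically. This requires the inverse $g^{ab}(\rho)$; the $\rho$-derivatives $g_{ab}'(\rho)$ and $g_{ab}''(\rho)$, read off directly from \eqref{equation:explicit-ambient-metric}; and the Levi-Civita connection coefficients, the Ricci curvature $R_{ab}(\rho)$, and the terms $\nabla_c g_{ad}'$ of the left-invariant metric $g(\rho)$, all of which are computed purely algebraically from the constant bracket relations of $(E_a)$. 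One then checks that every entry of each block is the zero function of $\rho$.

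The main obstacle is this last step: although the reduction to ordinary differential equations makes the verification finite and mechanical, assembling $R_{ab}(\rho)$ and the connection terms for the non-Riemannian, $\rho$-dependent left-invariant metric $g(\rho)$, and confirming the exact cancellations across all three blocks, is laborious and is best carried out with computer algebra (consistent with the reference to \texttt{DifferentialGeometry}). A secondary subtlety is the conformal-factor bookkeeping in identifying $g \in \mbc_{\mbE}$, since Nurowski's explicit representative and $g$ need only agree up to scale. Once both points are established, Theorem \ref{theorem:Fefferman-Graham} certifies that $\wtg_{\mbE}$ is the (essentially unique) ambient metric of $\mbc_{\mbE}$.
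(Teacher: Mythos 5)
Your proposal is correct and follows essentially the same route as the paper: the paper's proof likewise consists of checking that evaluating the parenthesized term at $\rho = 0$ exhibits $\wtg_{\mbE}$ in normal form with respect to a (left-invariant, Nurowski) representative $g_{\mbE} \in \mbc_{\mbE}$, and then verifying by direct computation that $\Ric(\wtg_{\mbE}) = 0$, which is stronger than the required $O(\rho^{\infty})$ condition. The only difference is one of explicitness: you spell out the steps the paper leaves implicit---that normal form yields homogeneity, straightness, and $\iota^*\wtg_{\mbE} = \mbg_0$; that $g(0)$ must be matched against Nurowski's representative for $\mbE$; and that nondegeneracy on $(-2,\infty)$ fixes the domain $\wtH$---and you organize the curvature check via the left-invariant ODE reduction of Proposition \ref{proposition:left-invariant-ambient-metric}, which the paper uses to find the metric rather than to verify it.
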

\begin{proof}
Evaluating the quantity in parentheses at $\rho = 0$ shows that $\wtg_{\mbE}$ is in normal form with respect to $g_{\mbE}$, and computing directly shows that $\wt\Ric := \Ric^{\wtg_{\mbE}} = 0$.
\end{proof}

Since solving the system \eqref{equation:ambient-metric-PDE} outright is typically difficult, even for left-invariant conformal structures, we indicate briefly a na\"{i}ve method for producing the explicit solution \eqref{equation:explicit-ambient-metric} for this particular case. First, note that for our chosen left-invariant frame $(E_a)$ most of the structure constants $C_{ab}^c$ (five of the $50$ with $a < b$) are zero, as are most of the coefficients of the representative metric $g_{\mbE}$ and Ricci curvature
\begin{equation}\label{equation:representative-metric-Ricci}
	\Ric^g(0) = \Ric^{g_{\mbE}} = (e^1)^2 - \tfrac{5}{4} e^1 e^3 + \tfrac{1}{4} (e^2)^2 + \tfrac{5}{2} e^2 e^5 + \tfrac{5}{2} e^3 e^4 .
\end{equation}

After computing the first several derivatives $\smash{g_{ab}^{(m)}(0)}$ of $g_{ab}(\rho)$ using the algorithm described after \eqref{equation:ambient-metric-PDE} some visible patterns emerge. First, only certain components $g_{ab}(\rho)$ have nonzero derivatives among them: Up to the symmetry $g_{ab} = g_{ba}$, only $g_{11}, g_{13}, g_{22}, g_{25}, g_{33}, g_{34}$ are nonzero.\footnote{For any ambient metric in normal form with respect to a metric $g$, the linear term in the Taylor expansion of $g(u, \rho)$ about $\rho = 0$ is a particular linear combination of $g$ and $\Ric^g$ \cite[(3.6)]{FeffermanGraham}, so it follows from \eqref{equation:representative-metric} and \eqref{equation:representative-metric-Ricci} without further computation that this claim is true for the first derivative, $g_{ab}'(0)$.} Second, the truncated Taylor series at $\rho = 0$ of $g_{25}, g_{34}$ agree. Third, the truncated Taylor series at $\rho = 0$ of the co\"{e}fficients $g_{11}(\rho), g_{13}(\rho), g_{22}(\rho), g_{33}(\rho)$ coincide with those of simple rational functions, namely, those in \eqref{equation:explicit-ambient-metric}.

These observations suggest the ansatz
\begin{multline*}
g(\rho) = -\frac{4}{2 + \rho} (e^1)^2 - \frac{4 + 3 \rho}{2 + \rho} e^1 e^3 + (e^2)^2 + 2 a(\rho) e^2 e^5 \\- \frac{\rho^2}{16 (2 + \rho)} (e^3)^2 + 2 a(\rho) e^3 e^4
\end{multline*}
for the quantity $g(h, \rho)$ in the normal form \eqref{equation:normal-form} for the ambient metric $\wtg$ of $\mbc_{\mbE}$. Then, decomposing the equation $\Ric^{\wtg} = 0$ with respect to the frame $(\partial_t, E_a, \partial_{\rho})$ yields a system of ordinary differential equations in $a(\rho)$. The full system, which has eight nonzero components, is too large to reproduce here, but, e.g., the component $\Ric^{\wtg}(E_2, \partial_{\rho}) = 0$ is a relatively simple first-order equation:
\[
	\frac{3 [2 (2 + \rho) a'(\rho) - a(\rho)]}{4 (2 + \rho)^2 a(\rho)} = 0 .
\]
This is separable and so can be solved readily by hand, and using the initial value $h(0) = g_{25}(0) = g_{34}(0) = 2$ gives the candidate solution $a(\rho) = \sqrt{2 (2 + \rho)}$. Substituting shows that this candidate is a solution to the system, so the resulting metric $\wtg_{\mbE}$ is indeed an ambient metric for $\mbc_{\mbE}$.

\begin{remark}\label{remark:nonpolynomial}
This is a first example of an explicit, closed-form ambient metric not polynomial in the ambient co\"{o}rdinate $\rho$. Since this article was first uploaded to the arXiv, more examples were produced in \cite{AndersonLeistnerNurowski}, including some of the ambient metrics $\wtg_{f, h}$ of conformal structures induced by $(2, 3, 5)$ distributions $\mbD_{f, h}$ given respectively in Monge normal form by the functions $F_{f, h}(x, y, p, q, z) := q^2 + f(x, y, p) + h(x, y) z$. As an anonymous referee observed, a priori the conformal structure $\mbc_{\mbE}$ may be (locally) equivalent to the conformal structure $\mbc_{f, h}$ induced by a distribution $\mbD_{f, h}$ for some $f, h$---and hence $\wtg_{\mbE}$ may occur (at least up to local isometry) in the family $\wtg_{f, h}$---but it turns out this is not the case. We give a proof of this assertion here, in part because the techniques used may be of independent interest.

Suppose there are such $f, h$. We first show briefly that local equivalence of $\mbc_{\mbE}$ and $\mbc_{f, h}$ implies local equivalence of $\mbE$ and $\mbD_{f, h}$: Theorem C of \cite{SagerschnigWillse} states that if for two different $(2, 3, 5)$ distributions $\mbD, \mbD'$ the induced conformal structures satisfy $\mbc_{\mbD} = \mbc_{\mbD'}$ then that conformal structure admits a so-called almost Einstein scale (see \S2.5 of that reference), in which case Proposition A of that reference implies that the normal conformal holonomy of that conformal structure is a proper subgroup of $\G_2^*$. On the other hand, Proposition \ref{proposition:G2-holonomy} below shows that the metric holonomy $\Hol(\wtg_{\mbE})$ is equal to $\G_2^*$, and by \cite[Corollary 1.2]{CGGH} it follows that the conformal holonomy $\Hol(\mbc_{\mbE})$ of $\mbc_{\mbE}$ is the full group $\G_2^*$, so $\mbc_{\mbE}$ does not admit an almost Einstein scale; hence $\mbE$ and $\mbD_{f, h}$ are locally equivalent.

Now, as observed in the proof of Proposition \ref{proposition:counterexample}, the fundamental curvature $A_{\mbE}$ of $\mbE$ has a quadruple root at all points, and hence so does the fundamental curvature $A_{f, h}$ of $\mbD_{f, h}$, which computing (using the algorithm in \cite[\S5]{GrahamWillse}) gives is
\[
	A_{f, g} = \left[\tfrac{1}{20} (3 f_{ppp} - 62 h_y) \,dq \,dx^3 + G(x, y, p, q, z) \,dx^4\right]\vert_{\mbD_{f, h}}
\]
for a particular function $G$. In particular, since $A_{f, h}$ has a quadruple root at all point, $f, h$ must together satisfy $3 f_{ppp} - 62 h_y = 0$, and differentiating both sides of this equation by $p$ gives that $f$ satisfies $f_{pppp} = 0$. The explicit formulae in \cite[(3.5), Theorem 3.2]{AndersonLeistnerNurowski} show that this latter condition implies that there is a representative metric $g_{f, h} \in \mbc_{f, h}$ for which the ambient metric in normal form with respect to $g_{f, h}$ is linear in $\rho$.

We can thus establish the claim by showing there is no $g \in \mbc_{\mbE}$ for which the ambient metric for $\mbc_{\mbE}$ in normal form with respect to $g$ is linear in $\rho$.

For a general conformal structure $\mbc$ and choice $g \in \mbc$ of representative metric consider the Taylor series expansion
\[
	g(u, \rho) \sim g(u) + \mu^{(1)}(u) \rho + \mu^{(2)}(u) \rho^2 + \cdots
\]
about $\rho = 0$ of the quantity $g(u, \rho)$ in \eqref{equation:normal-form}. The first equation in \cite[(3.18)]{FeffermanGraham} gives that (in dimension $5$) the co\"{e}fficient $\smash{\mu^{(2)}_{ab}}$ of the quadratic term is $-B_{ab} + P_a{}^c P_{bc}$, where (again specializing to dimension $5$) $P$ is the Schouten tensor of $g$, $P_{ab} = \frac{1}{3}\left(R_{ab} - \frac{1}{8} g^{cd} R_{cd} g_{ab}\right)$, $B$ is the Bach tensor of $g$, $B_{ab} = P_{b[c, a]}{}^c + P^{cd} W_{cabd}$, and $W$ is the Weyl tensor of $g$. Using the conformal invariance of $W$ and the conformal transformation laws for $B$ and $P$ \cite[pp. 56, 71, resp.]{FeffermanGraham} thus yields a formula for the co\"{e}fficient $\widehat{\mu}^{(2)}$ of the Taylor series expansion of the quantity $\wh g(u, \rho)$ in \eqref{equation:normal-form} for an ambient metric in normal form with respect to the general representative $e^{2 \Upsilon} g \in \mbc$ in terms of the arbitrary smooth function $\Upsilon$ and $g$ and its tensor invariants. (The formula is unwieldy and unenlightening, so we do not reproduce it here.) In the case of the metric $g_{\mbE} \in \mbc_{\mbE}$, computing gives, for example, that $\widehat{\mu}^{(2)}(E_1, E_1) = -\frac{1}{2} e^{-2 \Upsilon}$, which does not vanish for any $\Upsilon$. Thus, $\mbc_{\mbE}$ admits no representative metric with respect to which the normal-form ambient metric is linear in $\rho$, and hence it cannot be conformally equivalent to $\mbc_{f, g}$ for any $f, g$.
\end{remark}

\subsection{$\G_2^*$ holonomy}

Here we show that the metric $\wtg_{\mbE}$ produced in the previous subsection has holonomy equal to $\G_2^*$. We use the following theorem:

\begin{theorem}\cite[Theorem 1.1]{GrahamWillse}\label{theorem:graham-willse}
Let $\mbD$ be an oriented, real-analytic $(2, 3, 5)$ distribution. Then, the metric holonomy $\Hol(\wtg_{\mbD})$ of any real-analytic ambient metric $\wtg_{\mbD}$ for the conformal structure $\mbc_{\mbD}$ is contained in $\G_2^*$.
\end{theorem}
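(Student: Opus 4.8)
The plan is to exhibit a parallel generic $3$-form on the ambient manifold whose pointwise stabilizer is $\G_2^*$. Recall that $\G_2^*$ may be characterized as the subgroup of $\SO(3,4)$ stabilizing a generic $3$-form $\phi$ of the appropriate algebraic type on $\bbI \cong \bbR^7$ (equivalently, the split cross product $\btimes$ of Example \ref{example:flat-distribution}); since any ambient metric $\wtg_{\mbD}$ has signature $(3,4)$ and $\mbD$ is oriented, its holonomy is a priori contained in $\SO(3,4)$. It therefore suffices to produce a $\wtg_{\mbD}$-parallel $3$-form $\wt{\phi}$ on $\wtM$ that is pointwise of this generic type, for then $\Hol(\wtg_{\mbD}) \subseteq \Stab(\wt{\phi}) \cong \G_2^*$.

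First I would recall the conformal-tractor source of such a form. By the correspondence between $(2,3,5)$ distributions and conformal structures with holonomy reduction to $\G_2^*$ (refining Nurowski's construction \cite{NurowskiDifferential}; see \cite{Sagerschnig}), the normal conformal tractor connection of $\mbc_{\mbD}$ admits a parallel $3$-form $\Phi$ on the standard tractor bundle $\mcT$ that is pointwise of generic type, that is, whose stabilizer is $\G_2^*$. This $\Phi$ is precisely the tractor object encoding $\mbD$, and it is the conformal-geometric shadow of the desired ambient form. I would then transfer $\Phi$ to the ambient side using the canonical identification, along the zero locus $\{\rho = 0\} = \mcG$, of the restricted ambient tangent bundle $T\wtM|_{\mcG}$ (with the ambient Levi-Civita connection $\wt{\nabla}$) with $(\mcT, \nabla^{\mcT})$. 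Under this identification $\Phi$ determines a $3$-form on $T\wtM|_{\mcG}$ that is parallel in directions tangent to $\mcG$. I would extend this to a $3$-form $\wt{\phi}$ on a neighborhood of $\mcG$ by prescribing it to be homogeneous of the appropriate degree under the dilations $\delta_s$ and to agree with $\Phi$ along $\mcG$; homogeneity pins down its behavior in the fiber $\bbR_+$-direction, leaving only the transverse $\rho$-direction to control.

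The crux — and the main obstacle — is showing that $\wt{\phi}$ can be arranged to be parallel to infinite order in $\rho$ along $\mcG$. Here real-analyticity and Ricci-flatness do the work: since $\mbD$, hence $\wtg_{\mbD}$, is real-analytic, it suffices to prove that $\wt{\nabla}\,\wt{\phi}$ vanishes to infinite order along $\mcG$, after which real-analyticity forces $\wt{\nabla}\,\wt{\phi} \equiv 0$ on a neighborhood. To prove the infinite-order vanishing I would set up a recursion: differentiate the identity $\wt{\nabla}\,\wt{\phi} = 0$ repeatedly in $\rho$, evaluate at $\mcG$, and use the ambient equations \eqref{equation:ambient-metric-PDE} — which express the $\rho$-derivatives of $g(x,\rho)$ through the Ricci curvature of $g(x,\rho)$ and its covariant derivatives, all built from the same conformal data as $\Phi$ — to show that each order is determined by, and consistent with, the parallelism of $\Phi$ on $\mcG$.

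The delicate point is checking that this transverse propagation preserves the $\G_2^*$-structure at every order rather than generating components off the stabilizer. This is precisely where Ricci-flatness is essential: it forces the ambient curvature along $\mcG$ to correspond to the normal tractor curvature, which takes values in $\mfg_2^* \subseteq \mfso(3,4)$, and by the Ambrose--Singer mechanism this confines the relevant $\rho$-jets of $\wt{\nabla}\,\wt{\phi}$ to directions tangent to the $\G_2^*$-orbit, obstructing any leakage. Establishing that no such leakage occurs at arbitrary order — effectively a parallel tractor extension lemma, of which Theorem \ref{theorem:graham-willse} is the geometric consequence — is the heart of the argument. Finally, genericity of $\wt{\phi}$ on a full neighborhood of $\mcG$ (not merely on $\mcG$) follows by continuity, the generic locus being open, and then $\Stab(\wt{\phi}) \cong \G_2^*$ yields the asserted containment $\Hol(\wtg_{\mbD}) \subseteq \G_2^*$.
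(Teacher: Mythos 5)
The paper contains no proof of this statement to compare against: it is imported wholesale from Graham--Willse, and the citation \cite{GrahamWillse} \emph{is} the paper's proof. So the only meaningful comparison is with the argument in that reference, and your outline does reconstruct its architecture faithfully: the oriented $(2,3,5)$ structure induces a parallel, pointwise-generic tractor $3$-form (Nurowski, Sagerschnig, Hammerl--Sagerschnig); the standard tractor bundle with its normal connection is identified with $(T\wtM|_{\mcG}, \wt{\nabla})$; the tractor form is extended off $\mcG$ to a $\wt{\nabla}$-parallel form $\wt{\phi}$; real-analyticity upgrades infinite-order parallelism to genuine parallelism; and $\Hol(\wtg_{\mbD}) \subseteq \Stab(\wt{\phi}_u) \cong \G_2^*$.

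The genuine gap is the one you yourself flag: the parallel extension to infinite order is the entire content of the theorem, and you assert it rather than prove it, with a justification that would not survive being made precise. Two concrete problems. First, your construction of $\wt{\phi}$ is underdetermined as stated: homogeneity under the dilations $\delta_s$ constrains nothing off $\mcG$, because $\delta_s$ fixes $\rho$ and so the dilation orbits of points of $\mcG$ stay inside $\mcG$; the extension must actually be \emph{defined} by something like $\wt{\nabla}_{\partial_\rho} \wt{\phi} = 0$ (equivalently, solving for the $\rho$-jets order by order from that one equation), after which the substance is to show that the remaining, tangential components of $\wt{\nabla}\wt{\phi}$ also vanish to infinite order --- an overdetermined compatibility problem, not a recursion that automatically closes. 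Second, the ``Ambrose--Singer mechanism / no leakage'' paragraph is circular: parallelism of the tractor form only tells you that the \emph{tractor} curvature, i.e.\ the tangential components $\wt{R}(X, Y)$ with $X, Y$ tangent to $\mcG$, lies in $\mfg_2^* \subseteq \mfso(3,4)$, whereas the obstruction to propagating parallelism transversally is precisely the mixed components $\wt{R}(\partial_\rho, X)$, which are not part of the tractor curvature, and whose annihilation of $\wt{\phi}$ is equivalent to what you are trying to prove. Establishing this requires the inductive commutator computation --- using Ricci-flatness to infinite order, straightness, and the structure of the ambient equations \eqref{equation:ambient-metric-PDE} --- that occupies the bulk of \cite{GrahamWillse}. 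So: right architecture, correct identification of where the difficulty sits, but the theorem's core lemma is invoked in spirit rather than established.
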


\begin{proposition}\label{proposition:G2-holonomy}
The metric holonomy $\Hol(\wtg_{\mbE})$ of the ambient metric $(\wtH, \wtg_{\mbE})$ is equal to $\G_2^*$.
\end{proposition}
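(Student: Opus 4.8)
The plan is to establish the two inclusions separately. By Theorem~\ref{theorem:graham-willse}, we already know $\Hol(\wtg_{\mbE}) \subseteq \G_2^*$, since $\mbE$ is an oriented, real-analytic $(2,3,5)$ distribution and $\wtg_{\mbE}$ is a real-analytic ambient metric for $\mbc_{\mbE}$. So the entire content of the proposition is the reverse inclusion $\G_2^* \subseteq \Hol(\wtg_{\mbE})$, which amounts to showing that the holonomy is not contained in any proper closed subgroup of $\G_2^*$. My strategy is to pin down the holonomy algebra $\mathfrak{hol}(\wtg_{\mbE}) \subseteq \mfg_2^*$ directly and show it is all of $\mfg_2^*$.

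The concrete tool is the Ambrose--Singer theorem: $\mathfrak{hol}(\wtg_{\mbE})$, based at a fixed point $\wt{u} \in \wtH$, is spanned by all parallel transports of curvature endomorphisms $P_{\gamma}^{-1} \circ R(X,Y) \circ P_{\gamma}$, where $R$ is the Riemann curvature of $\wtg_{\mbE}$ and $\gamma$ ranges over loops. In practice one works with the \emph{infinitesimal holonomy algebra}, generated by the curvature endomorphisms $R(X,Y)$ together with their iterated covariant derivatives $(\nabla^k R)(Z_1,\dots,Z_k)(X,Y)$ evaluated at $\wt{u}$; since $\wtg_{\mbE}$ is real-analytic, this infinitesimal holonomy algebra coincides with the full holonomy algebra. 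So I would compute, at a conveniently chosen base point, the subspace $\mathfrak{h} \subseteq \mfg_2^*$ of $\End(T_{\wt{u}} \wtH)$ spanned by the values of $R$, $\nabla R$, $\nabla^2 R$, $\dots$, climbing in $k$ until the span stabilizes. The left-invariance afforded by Proposition~\ref{proposition:left-invariant-ambient-metric} makes this computation tractable: in the left-invariant frame $(E_a)$ extended to $\wtH$, the metric coefficients $g_{ab}(\rho)$ are explicit functions of $\rho$ alone (and the only nonzero ones are $g_{11}, g_{13}, g_{25}, g_{33}, g_{34}$), so the Christoffel symbols, curvature, and its covariant derivatives are all computable symbolically, e.g.\ in the \texttt{DifferentialGeometry} package already used elsewhere in the article.

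The verification then reduces to a dimension count: since $\dim \mfg_2^* = 14$ and $\mathfrak{h} \subseteq \mfg_2^*$, it suffices to exhibit $14$ linearly independent elements among the computed curvature endomorphisms and their covariant derivatives. Once $\dim \mathfrak{h} = 14$, we conclude $\mathfrak{h} = \mfg_2^*$, hence $\mathfrak{hol}(\wtg_{\mbE}) = \mfg_2^*$, and therefore the connected component $\Hol^0(\wtg_{\mbE})$ equals the connected group $\G_2^*$. Because $\G_2^*$ is connected and the inclusion $\Hol(\wtg_{\mbE}) \subseteq \G_2^*$ already holds, matching of identity components forces $\Hol(\wtg_{\mbE}) = \G_2^*$ with no separate component-group argument needed.

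\emph{The hard part} is the bookkeeping to guarantee that the $14$ endomorphisms I extract genuinely lie in a single algebra and are genuinely independent: one must fix a base point, trivialize the $\G_2^*$-structure so that each $R(X,Y)$ and each $(\nabla^k R)(\cdots)$ is realized as a concrete $7 \times 7$ (or, in the ambient $\mfso(3,4)$ picture, $8 \times 8$) matrix lying in the fixed $14$-dimensional copy of $\mfg_2^*$, and then check that a chosen collection has full rank $14$ over $\bbR$. This is purely mechanical but delicate, and it is exactly the kind of direct computation the introduction advertises; I expect no conceptual obstruction, only the need to carry the symbolic curvature computation far enough (a finite number of covariant derivatives) that the span saturates $\mfg_2^*$.
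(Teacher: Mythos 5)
Your proposal is correct and follows essentially the same route as the paper: containment via Theorem~\ref{theorem:graham-willse}, then the Ambrose--Singer theorem together with real-analyticity to identify the holonomy algebra with the infinitesimal holonomy, computed in the left-invariant frame until the span of curvature endomorphisms and their covariant derivatives reaches $\dim \G_2^* = 14$. The paper carries out exactly this computation, finding $6$ independent endomorphisms from $\wt{R}$ itself and $8$ more from its first covariant derivatives, so only one derivative is needed before the span saturates.
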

\begin{proof}
By Theorem \ref{theorem:graham-willse}, $\Hol(\wtg_{\mbE}) \leq \G_2^*$, so to prove the claim, it suffices to show that the dimension of the holonomy group (or equivalently its Lie algebra) is equal to $\dim \G_2^* = 14$. (In fact, since the maximal subgroups of $\G_2^*$ are the maximal parabolic subgroups, all of which have dimension $9$, it suffices to show that $\dim \Hol(\wtg_{\mbE}) > 9$.)

By the Ambrose-Singer Theorem \cite{AmbroseSinger}, the Lie algebra of $\Hol(\wtg_{\mbE})$ contains (in fact, since $\wtg_{\mbE}$ is real-analytic, is equal to) the \textit{infinitesimal holonomy algebra} of $\wtg_{\mbE}$ at any point $\wt u \in \wt H$: This is the Lie algebra $\mfhol_{\wt u}(\wtg_{\mbE}) \subseteq \mfso((\wtg_{\mbE})_{\wt u})$ generated by the value of its curvature $\wtR$ and the derivatives thereof at $\wt u$, or more precisely, the endomorphisms $\wt{\nabla}_{Z_k} \cdots \wt{\nabla}_{Z_1} \wt{R}_{\wt u} (X, Y)$, where $X, Y, Z_1, \ldots, Z_k \in T_{\wt u} \wt H$ and $\wt{\nabla}$ is the Levi-Civita connection of $\wtg_{\mbE}$.
Computing gives that the image of $\wt{R}_{\wt u}$ in $\End(T_{\wt u} \wtH)$ (where the basepoint $\wt u \in \wtH$, which we suppress below, is any point with $t = 1, \rho = 0$), is spanned by the linearly independent endomorphisms $\wtR_{12}, \wtR_{14}, \wtR_{16}, \wtR_{23}, \wtR_{24}, \wtR_{26}$, where $\wtR_{ab} := \wtR(E_a, E_b)$ and $E_6 := \partial_{\rho}$. Those elements, together with $\wt{\nabla}_{E_1} \wtR_{12}$, $\wt{\nabla}_{E_2} \wtR_{12}$, $\wt{\nabla}_{E_3} \wtR_{12}$, $\wt{\nabla}_{E_4} \wtR_{12}$, $\wt{\nabla}_{\partial_{\rho}} \wt{R}_{12}$, $\wt{\nabla}_{E_2} \wtR_{14}$, $\wt{\nabla}_{E_2} \wtR_{16}$, $\wt{\nabla}_{\partial_{\rho}} \wtR_{16}$, comprise a basis of the space of endomorphisms generated by at most one derivative of $\wt{R}$. But this basis has $\dim \G_2^* = 14$ elements.
\end{proof}

Instead of using Theorem \ref{theorem:graham-willse} one can alternatively show the containment $\Hol(\wtg_{\mbE}) \leq \G_2^*$ by verifying that (1) the $3$-form
\begin{align}\label{equation:parallel-3-form}
	\wt\Phi :=& 
	- 6 \sqrt{2} \, t^2 \,dt \wedge e^{12}
	- \frac{2}{\sqrt{2 + \rho}} t^2 \, dt \wedge e^1 \wedge d\rho
	- \frac{8 + 3 \rho}{2 \sqrt{2}} t^2 \,dt \wedge e^{23} \\
	& \qquad
	- 4 \sqrt{2 + \rho} \, t^2 \, dt \wedge e^{24}
	+ \rho \sqrt{2 + \rho} \, t^2 dt \wedge e^{35} \nonumber \\
	& \qquad
	+ \frac{8 + 3 \rho}{4 \sqrt{2 + \rho}} t^2 \,dt \wedge e^3 \wedge d\rho
	+ 2 \sqrt{2} \, t^3 e^{125}
	- 2 \sqrt{2} \, t^3 e^{134} \nonumber \\
	& \qquad
	+ \frac{8 + 3 \rho}{2 \sqrt{2}} t^3 e^{235}
	- \frac{1}{2 \sqrt{2}} t^3 e^{23} \wedge d\rho
	+ \sqrt{2 + \rho} \, t^3 e^{35} \wedge d\rho \nonumber
\end{align}
is parallel with respect to $\wt\nabla$ and (2) for any (equivalently, every) $\wt u \in \wtH$, the stabilizer of $\wt\Phi_{\wt u}$ in $\GL(T_{\wt u} \wtH)$ is isomorphic to $\G_2^*$ and contained in $\SO((\wt{g}_{\mbE})_u)$; here, $e^{ab} := e^a \wedge e^b$ and $e^{abc} := e^a \wedge e^b \wedge e^c$.

\begin{remark}
The result \cite[Theorem 1.2]{GrahamWillse} gives conditions on real-analytic, oriented $(2, 3, 5)$ distributions $\mbD$ that are together sufficient to guarantee that any associated real-analytic ambient metric has holonomy $G_2^*$. Though the conditions hold for almost all such distributions (in a sense that can be made precise), they do not all hold for $\mbE$, and hence that result cannot be used to prove Proposition \ref{proposition:G2-holonomy}: One of the conditions is that there is a point $u$ at which the fundamental curvature $A_u$ does not have a multiple root in $\mbD_u \otimes \bbC$, but as we found in the proof of Proposition \ref{proposition:counterexample} for $\mbE$ the fundamental curvature has a quadruple root at every point.
\end{remark}

\subsection{Projective structure with normal projective holonomy $\G_2^*$}
\label{subsection:projective-structure}

By \cite{GoverPanaiWillse} we may regard the real-analytic ambient manifold of a real-analytic conformal structure of odd dimension $n \geq 3$ and signature $(p, q)$ as the so-called Thomas cone of a projective structure of dimension $n + 1$, which is defined on the space of orbits of the $\bbR_+$-action defined by the dilations $\delta_s$. This identifies the Levi-Civita connection of the ambient metric with the normal tractor connection---a natural, projectively invariant vector bundle connection on a particular vector bundle of rank $n + 2$ over and canonically associated to the projective manifold---whose holonomy is thus reduced to $\Ooperator(p + 1, q + 1)$.

Thus, the metric $\wtg_{\mbE}$ can be used to construct an explicit $6$-dimensional projective structure $(H \times (-2, \infty), \mbp)$ whose normal projective holonomy is equal to $\G_2^*$. Computing the connection forms of $\wt\nabla$ and applying the formula in \cite[Remark 6.6]{GoverPanaiWillse} (using the section $H \times (-2, \infty) \to \wtH \leftrightarrow \bbR_+ \times H \times (-2, \infty)$ with image $\{t = 1\}$) gives an explicit representative connection $\nabla$ in $\mbp$; the connection is characterized by the covariant derivative formulae in the appendix.

Example 7.1 of \cite{GoverPanaiWillse} also gives a $1$-parameter family of nontrivial $6$-dimensional projective structures with normal holonomy contained in $\G_2^*$, but for every member of that family the containment is proper.

\pagebreak

\appendix

\section{Data for the connection in Subsection \ref{subsection:projective-structure}}

Here we give the nonzero connection forms $\omega_{ab}^c$ of the connection $\nabla$ defined in \S\ref{subsection:projective-structure} in the frame $(E_1, \ldots, E_6)$, where we set $E_6 = \partial_{\rho}$, on $H \times (-2, \infty)$. These are characterized by $\nabla_{E_a} E_b = \omega_{ab}^c E_c$.
\begin{multline*}
	-\tfrac{1}{16} \omega_{11}^6 = \omega_{12}^4 = \omega_{21}^4
		= -\frac{1 + \rho}{2 (2 + \rho)^2}, \\
	-2 \omega_{12}^1 = -\omega_{13}^2 = \omega_{15}^4 = -2 \omega_{21}^1 = -\omega_{31}^2 = \omega_{51}^4
		= \frac{4 + 3 \rho}{2 \sqrt{2} (2 + \rho)^{3 / 2}}, \\
	\omega_{12}^3 = -\omega_{14}^5 = -\tfrac{8}{3} \omega_{16}^4 = \omega_{21}^3 = -8 \omega_{26}^5 = -\omega_{41}^5 = -\tfrac{8}{3} \omega_{61}^4 = -8 \omega_{62}^5
		= \frac{\sqrt{2}}{(2 + \rho)^{3 / 2}}, \\
	\omega_{13}^5 = \omega_{31}^5
		= \frac{4 + 3 \rho}{4 (2 + \rho)^2}, \qquad
	\omega_{13}^6 = \omega_{31}^6
		= \frac{8 + 8 \rho + 3 \rho^2}{2 (2 + \rho)^2}, \\
	\omega_{16}^1 = \omega_{22}^5 = -2 \omega_{26}^2 = -2 \omega_{36}^3 = -2 \omega_{46}^4 = -2 \omega_{56}^5 \qquad\qquad\qquad \\ \qquad\qquad\qquad = \omega_{61}^1 = -2 \omega_{62}^2 = -2 \omega_{63}^3 = -2 \omega_{64}^4 = -2 \omega_{65}^5
		= -\frac{1}{2 (2 + \rho)} , \\
	\omega_{22}^2 = 2 \omega_{24}^4 = -\omega_{25}^5 = 2 \omega_{42}^4 = -\omega_{52}^5
		= \frac{1}{\sqrt{2 (2 + \rho)}} , \\
	-\omega_{22}^6 = 2 \omega_{24}^1 = \omega_{34}^2 = -\omega_{35}^3 = -2 \omega_{42}^1 = \omega_{52}^2 = \omega_{53}^3 = -\omega_{54}^4 = -\omega_{55}^5
		= 1 , \\
	\omega_{23}^1 = \omega_{32}^1
		= -\frac{\rho (4 + 3 \rho)}{32 \sqrt{2} (2 + \rho)^{3 / 2}} , \\
	\omega_{23}^3 = \omega_{32}^2 = -\tfrac{8}{5} \omega_{36}^4 = -\tfrac{8}{5} \omega_{63}^4
		= \frac{\rho}{4 \sqrt{2} (2 + \rho)^{3 / 2}} , \qquad
	\omega_{23}^4 = \omega_{32}^4
		= -\frac{\rho (1 + \rho)}{16 (2 + \rho)^2} \\
	\omega_{25}^6 = \omega_{34}^6 = \omega_{43}^6 = \omega_{53}^6 = \omega_{52}^6
		= -\frac{4 + \rho}{\sqrt{2 (2 + \rho)}} , \\
	\omega_{33}^2 = - 2 \omega_{35}^4 = - 2 \omega_{53}^4
		= -\frac{\rho^2}{8 \sqrt{2} (2 + \rho)^{3 / 2}} , \qquad
	-2 \omega_{33}^5 = \omega_{33}^6
		= -\frac{\rho^2}{8 (2 + \rho)^2} , \\
	\omega_{34}^5 = \omega_{43}^5
		= -\frac{8 + 5 \rho}{4 \sqrt{2} (2 + \rho)^{3 / 2}} , \qquad
	\omega_{36}^1 = \omega_{63}^1 = -\frac{3 \rho}{32 (2 + \rho)} .
\end{multline*}

%


\begin{thebibliography}{XX}
\bibitem{Agrachev} A.A.\ Agrachev, {\em Rolling balls and octonions}, Proceedings of the Steklov Institute of Mathematics {\bf 258} (2007) 13--22. arXiv:math/0611812

\bibitem{AgrachevSachov} A.A.\ Agrachev, and Yu.L.\ Sachov, {\em An Intrinsic Approach to the Control of Rolling Bodies}, Proceedings of the 38th IEEE Conference on Decision and Control, Vol. 1, Phoenix, Arizona, USA, December 7-10 (1999), 431--435.

\bibitem{Agricola} I.\ Agricola, {\em Old and New on the Exceptional Group $\G_2$}, Notices Amer. Math. Soc. {\bf 55} (2008), 923--929. {http://www.ams.org/notices/200808/tx080800922p.pdf}

\bibitem{AmbroseSinger} W.\ Ambrose, and I.M.\ Singer, {\em A Theorem on Holonomy}, Trans. Amer. Math. Soc. {\bf 75} (1953), 428--443. {http://www.ams.org/journals/tran/1953-075-03/S0002-9947-1953-0063739-1/S0002-9947-1953-0063739-1.pdf}

\bibitem{AnNurowski} D.\ An, and P.\ Nurowski, {\em Twistor space for rolling bodies}. arXiv:1210.3536

\bibitem{AndersonLeistnerNurowski} I.\ Anderson, T.\ Leistner, and P.\ Nurowski, {\em Explicit ambient metrics and holonomy}. arXiv:1501.00852

\bibitem{BaezHuerta} J.\ Baez, and J.\ Huerta, {\em $\G_2$ and the Rolling Ball}. arXiv:1205.2447

\bibitem{BorMontgomery} G.\ Bor, and R.\ Montgomery, {\em $\G_2$ and the rolling distribution}, Enseign. Math. (2) \textbf{55} (2009), 157--196. arXiv:math/0612469

\bibitem{BryantHsu} R.\ Bryant, and P. Hsu, {\em Rigidity of integral curves of rank two distributions}, Invent. Math. \textbf{114} (1993), 435--461. arXiv:math/0611812

\bibitem{BryantPrivate} R.\ Bryant, private communication.

\bibitem{CapSlovak} A.\ \v Cap, J.\ Slov\'{a}k, {\em Parabolic geometries. I. Background and general theory}. Mathematical Surveys and Monographs, 154. American Mathematical Society, Providence, RI (2009). x+628 pp.

\bibitem{Cartan} \'{E}.\ Cartan, {\em Les syst\'{e}mes de Pfaff, \`{a} cinq variables et les \'{e}quations aux d'eriv\'{e}es partielles du second ordre}, Ann. Sci. \'{E}cole Norm.\ Sup.\ (3) {\bf 27} (1910), 109--192.

\bibitem{CartanConformal} \'{E}.\ Cartan, {\em Les espaces \`{a} connexion conforme}, Ann. Soc. Polon. Math. {\bf 2} (1923), 171-221. {http://en.scientificcommons.org/8620519}

\bibitem{CartanStructureOfGroups} \'{E}.\ Cartan, {\em Sur la structure des groupes simples finis et continus}, C. R. Acad. Sc. {\bf 116} (1893), 784--786.

\bibitem{CGGH} A.\ \v Cap, A.R.\ Gover, C.R.\ Graham, M.\ Hammerl, {\em Conformal holonomy equals ambient holonomy}, Pacific J. Math. {\bf 285} (2016), 303--318.

\bibitem{DoubrovGovorovCounterexample} B.\ Doubrov, and A. Govorov, {\em A new example of a generic $2$-distribution on a $5$-manifold with large symmetry algebra}. arXiv:1305.7297

\bibitem{Engel} F.\ Engel, {\em Sur un groupe simple \`{a} quatorze param\'{e}tres}, C. R. Acad. Sci. {\bf 116} (1893), 786--788.

\bibitem{FeffermanGraham} C.\ Fefferman, and C.R.\ Graham, {\em The Ambient Metric}, Annals of Mathematics Studies, {\bf 178} Princeton University Press, Princeton, NJ (2012). arXiv:math/0710.0919

\bibitem{Goursat} \'{E}.\ Goursat, {\em Le\c{c}ons sur le probl\`{e}me de  Pfaff}, Librairie Scientifique J. Hermann, Paris (1922).

\bibitem{Gover} A.R.\ Gover, {\em Almost conformally Einstein manifolds and obstructions}, Proceedings of the 9th International Conference on Differential Geometry and its Applications, Prague 2004, Differential Geom. Appl., 247--260, Matfyzpress, Prague (2005). arXiv:math/0412393

\bibitem{GoverLeitner} A.R.\ Gover, F. Leitner, {\em A sub-product construction of Poincare-Einstein metrics}, Int. J. Math. {\bf 20} (2009), 1263--1287. arXiv:math/0608044

\bibitem{GoverPanaiWillse} A.R.\ Gover, R.\ Panai, and T.\ Willse {\em Nearly K\"{a}hler geometry and $(2, 3, 5)$-distributions via projective holonomy}. arXiv:1403.1959

\bibitem{GrahamWillse} C.R.\ Graham, and T.\ Willse {\em Parallel tractor extension and ambient metrics of holonomy split $\G_2$}, J. Differential Geom. {\bf 92} (2012), 463--506. arXiv:1109.3504

\bibitem{HammerlSagerschnig} M.\ Hammerl, K.\ Sagerschnig, {\em Conformal Structures Associated to Generic Rank 2 Distributions on 5-Manifolds---Characterization and Killing-Field Decomposition}, SIGMA Symmetry Integrability Geom. Methods Appl. {\bf 5}, 2009. arXiv:0908.0483

\bibitem{HSSTZ} M.\ Hammerl, K.\ Sagerschnig, J.\ \v Silhan, A.\ Taghavi-Chabert, V.\ \v Z\'adn\'ik, {\em Fefferman-Graham ambient metrics of Patterson-Walker metrics}. 	arXiv:1608.06875

\bibitem{LeistnerNurowski} T.\ Leistner, P.\ Nurowski, \textit{Conformal structures with $\mathrm{G}_{2(2)}$-ambient metrics}, Ann. Sc. Norm. Super. Pisa Cl. Sci. \textbf{11} (2012), 407--436. arXiv:0904.0186

\bibitem{LeistnerNurowskiPPwave} T.\ Leistner, P.\ Nurowski, {\em Ambient metrics for $n$-dimensional pp-waves}, Comm. Math. Phys. {\bf 3} (2010), 881--898. arXiv:0810.2903


\bibitem{NurowskiDifferential} P.\ Nurowski, \textit{Differential equations and conformal structures}, J. Geom. Phys., \textbf{55} (2005), 19--49. arXiv:math/0406400

\bibitem{NurowskiAmbient} P.\ Nurowksi, {\em Conformal structures with explicit ambient metrics and conformal $\mathrm{G}_2$ holonomy}, Symmetries and Overdetermined Systems of Partial Differential Equations
The IMA Volumes in Mathematics and its Applications Volume 144 (2008), 515--526. arXiv:math/0701891

\bibitem{Sagerschnig} K.\ Sagerschnig, {\em Split octonions and generic rank two distributions in dimension five}, Arch.\ Math.\ (Brno) {\bf 42} (2006), suppl., 329--339. {http://www.emis.de/journals/AM/06-S/sager.pdf}

\bibitem{SagerschnigWillse} K.\ Sagerschnig, T.\ Willse, {\em The Geometry of Almost Einstein $(2, 3, 5)$ Distributions}, SIGMA {\bf 13} (2017), 56 pp.

\bibitem{WillseHeisenberg} T.\ Willse, {\em Highly symmetric 2-plane fields on 5-manifolds and 5-dimensional Heisenberg group holonomy}, Differential Geom. Appl. {\bf 33} (2014), 81--111. arXiv:1302.7163

\bibitem{WillseLeftInvariant} T.\ Willse, {\em Ambient metrics of left-invariant conformal structures}, in preparation.

\bibitem{Zelenko} I.\ Zelenko, {\em On variational approach to differential invariants of rank two distributions}, Differential Geom. Appl. {\bf 24} (2006), 253--259. arXiv:math/0402171

\end{thebibliography}
\end{document}